\documentclass{article}

\usepackage{subfiles}
\usepackage{packages/commonpackage}
\usepackage{packages/package_for_list_of_content}
\usepackage{packages/package_to_delete}
\usepackage{packages/macro}

\numberwithin{equation}{section}
\theoremstyle{plain}
\newtheorem{theorem}{Theorem}[section]
\newtheorem{proposition}[theorem]{Proposition}

\newtheorem{lemma}[theorem]{Lemma}

\newtheorem{remark}[theorem]{Remark}

\newcommand{\Mst}{M_{\mathrm{st}}}
\newcommand{\Mloc}{M_{\mathrm{loc}}}
\newcommand{\Mrec}{M_{\mathrm{rec}}}
\newcommand{\ind}{\mathbf 1}
\newcommand{\Id}{\mathrm{Id}}
\newcommand{\cK}{\mathcal K}

\title{Sharp $L^p$ estimates for the strong spherical maximal operator}
\author{Mingfeng Chen}
\date{}

\begin{document}
\maketitle

\begin{abstract}
    We prove that the strong spherical maximal operator in $\R^n$
    is bounded on $L^p$ for $n\geq4$ and $p>(n+1)/(n-1)$.
    This establishes the conjecture of Hickman and Zahl in the
    remaining dimensions. The main ingredient
    is a local $L^2$ estimate for the pieces obtained by a dyadic
    decomposition in the normal variable. After squaring one input
    coordinate, we reduce this estimate to a $TT^*$ argument for
    diagonal quadratic phases.
\end{abstract}

\section{Introduction}
\subsection{Statement of the main theorem}

Let $\sigma$ be the normalized surface measure on $S^{n-1}$. For $r=(r_1,\ldots,r_n)\in(0,\infty)^n$, write
\begin{align}
    \delta_ru=(r_1u_1,\ldots,r_nu_n),
    \qquad
    \int f\,d\sigma_{r}=\int_{S^{n-1}}f(\delta_r\omega)\,d\sigma(\omega).
\end{align}
The strong spherical maximal operator is defined by
\begin{align}\label{eq:strong}
    \Mst f(x)=\sup_{r\in(0,\infty)^n}
    \left|\int_{S^{n-1}}f(x-\delta_r\omega)\,d\sigma(\omega)\right|.
\end{align}
The operator $\Mst$ is a multiparameter generalization of Stein's
spherical maximal function \cite{Stein}. We follow the notation of
Hickman and Zahl \cite{HZ}. In particular, $\sigma_r$ is the pushforward
of $\sigma$ under $\delta_r$, which in general differs from normalized
Euclidean surface measure on the ellipsoid. We write
\begin{align}\label{eq:ellipsoid}
    E(x;r)=\{y\in\R^n:F_{x,r}(y)=0\},\qquad
    F_{x,r}(y)=\sum_{i=1}^n\frac{(y_i-x_i)^2}{r_i^2}-1,
\end{align}
and define the local operator by
\begin{align}\label{eq:local-maximal}
    \Mloc f(x)=\sup_{r\in[1,2]^n}|f*\sigma_r(x)|.
\end{align}

Our main result is the following.
\begin{theorem}\label{thm:main}
    Let $n\geq4$ and
    \begin{align}\label{eq:range}
        \frac{n+1}{n-1}<p\leq\infty.
    \end{align}
    Then
    \begin{align}\label{eq:main}
        \|\Mst f\|_{L^p(\R^n)}\lesim_{n,p}\|f\|_{L^p(\R^n)}
    \end{align}
    for every $f\in C_c(\R^n)$.
\end{theorem}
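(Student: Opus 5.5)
We will follow the strategy of Hickman and Zahl: first reduce Theorem~\ref{thm:main} to an estimate for $\Mloc$, and then prove that estimate by decomposing the measure dyadically in the normal variable.

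\textbf{Step 1: reduction to $\Mloc$.} The operator $\Mst$ commutes with the anisotropic dilations $f\mapsto f\circ\delta_t^{-1}$, $t\in(0,\infty)^n$, and these dilations preserve $L^p$ norms up to the Jacobian factor. We cover $(0,\infty)^n$ by the dyadic boxes $\prod_i[2^{k_i},2^{k_i+1}]$. Each box is the image of $[1,2]^n$ under some $\delta_t$, so on each box the local supremum is a rescaled copy of $\Mloc$. The supremum over infinitely many boxes cannot be handled by the triangle inequality. We would use the multiparameter square-function argument of Hickman--Zahl: a Littlewood--Paley decomposition in each coordinate, combined with the strong maximal function to control low-frequency parts. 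This reduces \eqref{eq:main} to an $L^p$ bound for $\Mloc$ on the same range, with some decay in the frequency scale. We take this reduction from \cite{HZ} as given. It is known to require only that the local estimate hold with a summable gain.

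\textbf{Step 2: decomposition and interpolation.} Fix a smooth $\psi$ and write
\[
f*\sigma_r(x)=\sum_{j\ge0}A_j^r f(x),
\]
where $A_j^r$ has kernel localized to $|F_{x,r}(y)|\sim2^{-j}$, i.e.\ a $2^{-j}$-neighbourhood of $E(x;r)$, smoothed at that scale. For each $j$ we aim at
\[
\Big\|\sup_{r\in[1,2]^n}|A_j^rf|\Big\|_{L^1}\lesssim 2^{j}\|f\|_{L^1},\qquad \Big\|\sup_{r\in[1,2]^n}|A_j^rf|\Big\|_{L^2}\lesssim 2^{-j\frac{n-3}{2}}\,j^{C}\|f\|_{L^2}.
\]
The trivial $L^\infty$ bound $O(1)$ and Riesz--Thorin interpolation between the $L^1$ and $L^2$ bounds give exponent decay $2^{-j\epsilon}$ exactly when $p>(n+1)/(n-1)$. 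Here $1/p=1-\theta/2$, and the exponent of $2^{j}$ is $(1-\theta)-\theta\frac{n-3}{2}$, which is negative iff $\theta>2/(n-1)$, i.e.\ iff $p>(n+1)/(n-1)$. Summing over $j$ then gives the bound for $\Mloc$. The $L^1$ bound is elementary: the kernel has height about $2^{j}$ on a shell of volume about $2^{-j}$, and the supremum over $r$ costs at most another factor from the $n$-parameter family at scale $2^{-j}$. If that costs $2^{jn}$, we instead interpolate at $L^{1+}$ using Hickman--Zahl's bounds, but the $L^2$ bound is the new input.

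\textbf{Step 3: the local $L^2$ estimate (main obstacle).} The supremum over $r\in[1,2]^n$ is handled by Sobolev embedding in $r$. Since $A_j^r$ has $r$-derivatives of size $2^{j}$ per parameter, it suffices to bound the parameter-extended operator $f\mapsto A_j^rf(x)$ in $L^2(dx\,dr)$ with gain $2^{-j(n-3)/2-jn/2}$ in $n$ extra variables. The substitution $s_i=r_i^{-2}$ makes $F_{x,r}$ affine in $s$, but it is still quadratic in $y-x$. So, following the abstract, we also change variables $z_1=y_1^2$ (squaring one input coordinate, localized away from $y_1=0$). The phase of the oscillatory integral representation $\int e^{i\lambda F}d\lambda$ at $\lambda\sim2^{j}$ then becomes a sum of diagonal quadratic terms $\lambda s_i(y_i-x_i)^2$, together with one term linear in $z_1$. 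We then run $TT^*$. The kernel of $TT^*$ is an oscillatory integral in $(x,\lambda,s)$ whose phase is diagonal, so it factorizes coordinatewise into one-dimensional Fresnel-type integrals. Each gives decay $(2^{j}|y_i-y_i'|)^{-1/2}$ or better, and the extra variable from $z_1$ supplies the missing nondegeneracy. Bounding the resulting kernel by Schur's test should give the $2^{-j(n-3)}$ gain for $TT^*$, up to logarithms. The hardest part is ensuring that the degenerate region, where the rotational curvature of the $n$-parameter family vanishes, loses only logarithms; this is where $n\ge4$ should enter.
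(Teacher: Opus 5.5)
There is a genuine gap. Your Step~2 target is false, and the interpolation arithmetic hides this. With $1/p=1-\theta/2$, the exponent $1-\theta(n-1)/2$ is negative iff $\theta>2/(n-1)$, i.e.\ iff $p>(n-1)/(n-2)$, not $p>(n+1)/(n-1)$. For $n\geq4$ we have $(n-1)/(n-2)<(n+1)/(n-1)$. Your $L^1$ bound is trivially true for radii in $[1,2]^n$, since the kernel is $O(2^j)$ on a fixed box. So an $L^2$ bound $2^{-j(n-3)/2}j^C$ would make $\Mloc$ bounded below a threshold that is necessary.

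Concretely, the $n$ free radii let you prescribe the tangent plane at a point. If $\nu$ has positive entries, then for $x$ in an open set the equations $x_i/r_i^2=-c\nu_i$, $\sum_i x_i^2/r_i^2=1$ have a solution $r\in[1,2]^n$. Then $E(x;r)$ is tangent to $\nu^\perp$ at $0$. For the plate $S=\{|y\cdot\nu|<\delta,\ |y|<c\delta^{1/2}\}$ this gives $\Mloc\ind_S\geq c\delta^{(n-1)/2}$ on a set of measure comparable to one, while $\|\ind_S\|_p$ is comparable to $\delta^{(n+1)/(2p)}$. Hence $p\geq(n+1)/(n-1)$ is necessary for $\Mloc$.

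Because your $L^1$ bound is true, no $L^2$ exponent larger than $(n-3)/4$ is compatible with this example. The paper proves exactly $\lambda^{-(n-3)/4}$, and then the exponent $1-\theta(n+1)/4$ is negative precisely when $p>(n+1)/(n-1)$. Your two errors cancel. The extra factor of two is the difference between the norm $\lambda^{1-(n-1)/2}$ of $TT^*$ and the norm of $T$.

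Step~3 also cannot work as designed. Sobolev embedding in the $n$ radii costs $2^{jn/2}$. By Plancherel, the map $f\mapsto A_j^rf(x)$ into $L^2(dx\,dr)$ has norm comparable to $2^{-j(n-1)/2}$: the Fourier transform of the $j$-th kernel has size about $2^{-j(n-1)/2}$ at $|\xi|$ comparable to $2^j$, and averaging its square over $r$ gains nothing. So this route gives at best $2^{j/2}$, and the gain $2^{-j(2n-3)/2}$ you require is impossible. Separately, $(y_1-x_1)^2$ is not affine in $z_1=y_1^2$ unless $x_1$ is frozen.

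The missing idea has three parts: linearize the supremum by a measurable $r(x)$, never differentiate in $r$, and freeze an output coordinate. On a patch where $y_n-x_n$ has a fixed sign, fix $x_n=h$ and put $y'=u$, $y_n=h+\sqrt s$. Then $F_{x,r}=(s-\Phi_{x',h}(u))/r_n^2$ exactly, with $\Phi_{x',h}$ a sum of one-variable quadratics whose coefficients $r_n^2/r_i^2$ depend only measurably on $x'$. In the $TT^*$ kernel in $(x',z')$, the $s$-integral gives $(1+|\tau-\sigma|)^{-2}$. In each $u_i$ you compare $\tau b_i(x)-\sigma b_i(z)$ with $\lambda|x_i-z_i|$: if it is large, use second-derivative van der Corput; otherwise use first-derivative van der Corput via the centre separation. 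Either way you get $(1+\lambda|x_i-z_i|)^{-1/2}$. Schur's test then gives $\lambda^{-(n-3)/2}$ for $TT^*$, hence $\lambda^{-(n-3)/4}$ for $T$ after integrating in $h$. No degenerate region needs separate treatment, and $n\geq4$ enters only through $(n-3)/4>0$.

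Finally, your Step~1 is asserted rather than checked, and it is needed exactly in the new range $p<2$, where square-function reductions are not straightforward. The paper avoids it. It proves a local-to-global lemma on $L^2$ only, losing $(\log\lambda)^{n/2}$. It then interpolates the global pieces against the trivial bound $\sup_{r>0}|K_{j,r}*f|\lesim 2^j\Mrec f$ on $L^q$, $q>1$.
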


For $n\geq3$, Lee, Lee and Oh \cite{LLO} proved \eqref{eq:main}
in the range $p>2(n+1)/(n-1)$. Hickman and Zahl \cite{HZ} improved
this to $p>2$ and conjectured the range \eqref{eq:range}.
They also showed that this range is necessary and that the estimate
fails at the endpoint; see \cite[Appendix~A]{HZ}.
Theorem~\ref{thm:main} proves their conjecture for $n\geq4$.

The case $n=3$ was proved in \cite{HZ}. For $n=2$, the bound follows
from Chen, Guo and Yang \cite{CGY}, or by combining Lee, Lee and
Oh \cite{LLOell} with Pramanik, Yang and Zahl \cite{PYZ}.
Together with these results, Theorem~\ref{thm:main} gives
\eqref{eq:main} for every $n\geq2$ and $p>(n+1)/(n-1)$.

\subsection{Idea of the proof}

We briefly explain the main idea of the proof. We first restrict the radii to $[1,2]^n$ and decompose the surface measure into
oscillatory pieces. The main step is to prove an $L^2$ estimate
with decay for each piece. We then extend this estimate to all
positive radii and interpolate with an elementary $L^q$ bound.

Fix $\chi\in C_c^\infty(\{u:1/2<|u|<2\})$ and
$\beta\in C_c^\infty(\{\tau:1/2\leq|\tau|\leq2\})$. For $\lambda\geq1$, define
\begin{align}\label{eq:normal-kernel}
    K_\lambda(u)
    =\chi(u)\lambda\check\beta\bigl(\lambda(|u|^2-1)\bigr).
\end{align}
For $r\in(0,\infty)^n$, let $K_{\lambda,r}$ be the dilation of
$K_\lambda$. Explicitly,
\begin{align*}
    K_{\lambda,r}(y)
    &=\frac{1}{r_1\cdots r_n}K_\lambda(\delta_r^{-1}y)\\
    &=\frac{\lambda}{r_1\cdots r_n}
    \chi\left(\frac{y_1}{r_1},\ldots,\frac{y_n}{r_n}\right)
    \check\beta\left(\lambda\left(\sum_{i=1}^n
    \frac{y_i^2}{r_i^2}-1\right)\right).
\end{align*}
Here $\lambda$ is the frequency dual to the defining function
$|u|^2-1=F_{0,\mathbf1}(u)$. We refer to $\lambda$ as the normal
frequency.
Fourier inversion in this variable decomposes the surface measure
into kernels of the form \eqref{eq:normal-kernel}; the precise
decomposition is given in Section~\ref{sec:main-proof}.
The following local $L^2$ bound is the main estimate of the paper.
Its proof rests on the product kernel estimate
\eqref{eq:intro-product} below.

\begin{proposition}\label{prop:local}
    For $n\geq2$ and $\lambda\geq1$,
    \begin{align}\label{eq:local-bound}
        \left\|\sup_{r\in[1,2]^n}|K_{\lambda,r}*f|\right\|_2
        \lesim \lambda^{-(n-3)/4}\|f\|_2.
    \end{align}
    The implicit constant depends on $n$, $\chi$ and $\beta$.
\end{proposition}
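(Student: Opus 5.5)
The plan is to reduce the maximal estimate \eqref{eq:local-bound} to $L^2(dx\,dr)$ estimates for the family $K_{\lambda,r}*f$ and its $r$-derivatives, and to prove those by a $TT^*$ argument.

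\textbf{Step 1: linearizing the phase in the parameters.} Write $s_i=r_i^{-2}\in[1/4,1]$. Fourier inversion in the normal variable gives
\begin{align*}
K_{\lambda,r}*f(x)=\frac{\lambda}{r_1\cdots r_n}\int\beta(\tau)e^{-i\lambda\tau}\int e^{i\lambda\tau\sum_i s_i(x_i-y_i)^2}\,\chi\bigl(\delta_r^{-1}(x-y)\bigr)f(y)\,dy\,d\tau .
\end{align*}
The phase is then linear in $s$ and a diagonal quadratic form in $x-y$. The smooth amplitude $\chi(\delta_r^{-1}(x-y))$ and the prefactor are harmless: I will expand them in a rapidly convergent Fourier series in $s$ and $x-y$ on the relevant compact set.

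Next I take a partition of unity on the support of $\chi$. Each piece has $|u_j|\gtrsim n^{-1/2}$ for some $j$, and after relabeling I assume $j=1$. On this piece $|x_1-y_1|\sim1$, so the substitution $t_1=(x_1-y_1)^2$ on each of the two half-lines has a smooth nondegenerate Jacobian. This is the ``squaring one input coordinate'' step. The other coordinates are left alone, since they may vanish and squaring them would create singular Jacobians.

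\textbf{Step 2: reducing the supremum to $L^2$ bounds.} Let $G(x,s)$ denote the piece. Sobolev embedding in the $n$ variables $s$, with each $\partial_{s_i}$ costing at most $\lambda$ (it brings down $\lambda\tau(x_i-y_i)^2$, which is again an operator of the same type), gives
\begin{align*}
\Bigl\|\sup_s|G|\Bigr\|_{L^2_x}\lesssim\lambda^{n/2}\max_{A\subset\{1,\dots,n\}}\|\lambda^{-|A|}\partial_s^AG\|_{L^2_{x,s}} .
\end{align*}
Here I would use the fractional $H^{1/2+\epsilon}$ version, and handle the $\epsilon$-loss by interpolating between the $L^2_{x,s}$ bound and the bound with one derivative per variable. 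Hence it suffices to prove the fixed-$s$-averaged bound
\begin{align*}
\|T_\lambda f\|_{L^2(dx\,ds)}\lesssim\lambda^{-n/2-(n-3)/4}\,\|f\|_2
\end{align*}
for operators of the form
\begin{align*}
T_\lambda f(x,s)=\lambda\int\!\!\int a(\tau,s,x-y)\,e^{i\lambda\tau\sum_i s_i(x_i-y_i)^2}f(y)\,dy\,d\tau .
\end{align*}

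\textbf{Step 3: the $TT^*$ computation.} Compute $T_\lambda T_\lambda^*$, integrating first over $s$. In the squared coordinate the $s_1$-integral is, after the change of variables $t_1=(x_1-y_1)^2$, a one-dimensional Fourier transform. Plancherel in $(s_1,t_1)$ therefore gives a factor $\lambda^{-1}$ with no loss. In each of the remaining coordinates $i\geq2$ the $s_i$-integration produces a kernel
\begin{align*}
\bigl(1+\lambda\,|(x_i-y_i)^2-(x_i-z_i)^2|\bigr)^{-N}
\end{align*}
localizing $|x_i-y_i|\approx|x_i-z_i|$. The remaining $x'$-integration of the diagonal quadratic phase should then yield, by stationary phase, decay $\lambda^{-1/2}$ per coordinate in $|y_i-z_i|$. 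The kernel estimates will be organized as a product over coordinates, using the diagonal structure of the phase; this is how I read the product kernel estimate \eqref{eq:intro-product} the paper announces. Summing the coordinate gains and inserting them into Schur's test should produce exactly the exponent $-n/2-(n-3)/4$. The three lost quarter-powers come from the $\tau$-integration and from the coordinates where $x_i-y_i$ is near $0$.

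\textbf{Main obstacle.} The hardest step is this product kernel bound, uniformly over the region where some $|x_i-y_i|$ are small. There the two-sided localization degenerates and the stationary points of the quadratic phase coalesce. My first attempt will be to dyadically decompose $|x_i-y_i|\sim2^{-k_i}$ for $i\geq2$, prove the one-dimensional $TT^*$ estimate on each dyadic piece with its own gain, and sum the resulting geometric series. I expect that summation to work only because one coordinate, $x_1-y_1$, is kept bounded away from zero, and possibly only for $n\geq4$ in the later applications.
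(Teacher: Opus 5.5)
There is a genuine gap in Step 2: the estimate you reduce to is false, so no $TT^*$ computation in Step 3 can prove it. For each fixed $s$, your $T_\lambda$ is, up to harmless factors, convolution with $K_{\lambda,r}$, where $r_i=s_i^{-1/2}$. Plancherel therefore computes the $L^2(dx\,ds)$ norm exactly:
\begin{align*}
\|T_\lambda f\|_{L^2(dx\,ds)}^2=\int|\widehat f(\xi)|^2\int|\widehat{K_\lambda}(\delta_r\xi)|^2\,ds\,d\xi .
\end{align*}
Stationary phase, first in $u$ and then in the normal frequency, shows that $|\widehat{K_\lambda}(\eta)|$ is comparable to $\lambda^{-(n-1)/2}|\beta(|\eta|/2\lambda)|$ on the whole dyadic annulus $|\eta|\sim\lambda$. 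This holds whenever $\chi\neq0$ on $S^{n-1}$, for instance $\chi=1$ near $S^{n-1}$ as in Section~\ref{sec:main-proof}. In other words, $K_\lambda$ is a Littlewood--Paley piece of $\sigma$. Hence, for suitable $|\xi|\sim\lambda$ and all $r$ in a fixed neighbourhood of $\mathbf 1$, the multiplier has size at least $c\lambda^{-(n-1)/2}$, so $\|T_\lambda\|_{L^2\to L^2(dx\,ds)}\geq c\lambda^{-(n-1)/2}$. Your target $\lambda^{-n/2-(n-3)/4}=\lambda^{-3(n-1)/4}$ is smaller than this for every $n\geq2$. The same computation shows that your Sobolev-in-the-radii route, followed by $L^2(dx\,dr)$ bounds, yields at best $\lambda^{n/2}\cdot\lambda^{-(n-1)/2}=\lambda^{1/2}$, i.e.\ no decay at all. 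Step 3 also conflates $T_\lambda T_\lambda^*$, whose kernel integrates over $y$, with $T_\lambda^*T_\lambda$, whose kernel integrates over $(x,s)$.

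The missing idea is to avoid differentiating in $r$ altogether. The paper linearizes with a measurable choice $r(x)$, freezes the output slice $x_n=h$, and substitutes $y_n=h+\sqrt s$ in the input. Since $h$ is fixed, this changes only input variables, and each ellipsoid becomes a graph $s=\Phi_{x'}(u)$ with $\Phi_{x'}$ a diagonal quadratic. The slice operator maps $L^2(\R^n_{u,s})$ to $L^2(\R^{n-1}_{x'})$. The kernel of its $TT^*$ integrates only over the input, so the measurable dependence on $x'$ is never differentiated. The $s$ integral forces $|\tau-\sigma|\lesim1$. Lemma~\ref{lem:product} then gives $|\cK(x',z')|\lesim\lambda\prod_{i<n}(1+\lambda|x_i-z_i|)^{-1/2}$, using van der Corput in each coordinate: the second-derivative form when the two quadratic coefficients differ by at least a small multiple of $\lambda|x_i-z_i|$, and the first-derivative form otherwise. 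Schur's test gives $\lambda^{1-(n-1)/2}$, and integrating over $h$ recovers $\|f\|_2$. The gain $\lambda^{-(n-1)/4}$ over the trivial $\lambda^{1/2}$ comes from this mismatch between the $n$-dimensional input and the $(n-1)$-dimensional output, not from a Sobolev embedding.
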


Let us explain where the exponent in \eqref{eq:local-bound} comes from. We divide the kernel into finitely many pieces and consider one on which the last coordinate is negative and bounded away from zero. We continue to write $\chi$ for the cutoff on this piece. To handle the supremum, choose $r(x)\in[1,2]^n$ measurably
as a function of $x=(x',h)$. It suffices to prove an estimate independent of this choice.

Fix $h$ and make the change of variables
\begin{align}\label{eq:intro-change}
    y'=u,\qquad y_n=h+\sqrt{s}.
\end{align}
Then the equation $F_{x,r}(y)=0$ becomes
\begin{align}\label{eq:intro-graph}
    s=\Phi_{x',h}(u)
    :=r_n^2-\sum_{i=1}^{n-1}\frac{r_n^2}{r_i^2}(u_i-x_i)^2.
\end{align}
The point of this change of variables is that $\Phi_{x',h}$ is a sum of quadratic polynomials, each depending on a single coordinate of $u$. This allows us to estimate the corresponding oscillatory integral one coordinate at a time. All derivatives fall on the input variables $(u,s)$, so the measurable dependence of the radii on $x'$ causes no difficulty.

For fixed $h$ and a fixed measurable choice of radii, the operator
in these coordinates acts on functions $g(u,s)$ by
\begin{align*}
    T_{\lambda,h}g(x')
    =\int_{\R^{n-1}}\int_0^\infty
    K_{\lambda,r(x',h)}(x'-u,-\sqrt{s})
    g(u,s)\,\frac{ds\,du}{2\sqrt{s}}.
\end{align*}
The patch condition restricts $s$ to a fixed compact interval
away from zero. We regard $T_{\lambda,h}$ as an operator from
$L^2(du\,ds)$ to $L^2(dx')$. Let $\cK(x',z')$ denote the integral
kernel of $T_{\lambda,h}T_{\lambda,h}^*$. Explicitly,
\begin{align*}
    \cK(x',z')
    &=\int_{\R^{n-1}}\int_0^\infty
    K_{\lambda,r(x',h)}(x'-u,-\sqrt{s})\\
    &\qquad\times
    \overline{K_{\lambda,r(z',h)}(z'-u,-\sqrt{s})}
    \,\frac{ds\,du}{4s}.
\end{align*}
We suppress its dependence on $\lambda$, $h$ and the chosen radii.
The main technical step is to prove
\begin{align}\label{eq:intro-product}
    |\cK(x',z')|
    \lesim
    \lambda\prod_{i=1}^{n-1}(1+\lambda|x_i-z_i|)^{-1/2}
    \ind_{\{|x'-z'|_\infty\leq C\}}.
\end{align}
Here $\cK$ is a kernel on pairs of output points; its definition
is distinct from that of the convolution kernel $K_{\lambda,r}$.
To obtain each factor in this bound, we compare the two quadratic
coefficients in the corresponding coordinate. If their difference
is large, we apply the second derivative form of van der Corput's
lemma. Otherwise, the separation $x_i-z_i$ gives a lower bound
for the first derivative. In either case, we obtain the factor
$(1+\lambda|x_i-z_i|)^{-1/2}$.

The integral of each factor over a bounded interval is
$O(\lambda^{-1/2})$. Schur's test therefore gives a bound
$O(\lambda^{1-(n-1)/2})$ for the $TT^*$ norm. Taking a square root
and integrating in $h$ gives \eqref{eq:local-bound}.
The details are given in Sections~\ref{sec:quadratic}
and~\ref{sec:local}.

To pass to arbitrary radii, we prove the following lemma.
\begin{lemma}\label{lem:global}
    Let $K\in C_c^1(\R^d)$ be supported in $[-R,R]^d$, where $R\geq1$. For $r\in(0,\infty)^d$, write
\begin{align*}
    K_r(y)=\frac{1}{r_1\cdots r_d}
    K\left(\frac{y_1}{r_1},\ldots,\frac{y_d}{r_d}\right).
\end{align*}
    Suppose
    \begin{align}\label{eq:global-hyp}
        \left\|\sup_{t\in[1,2]^d}|K_{t}*f|\right\|_2
        \leq A\|f\|_2,
        \qquad B=\|K\|_{C^1}.
    \end{align}
   Then, for every integer $M\geq3$, we have
    \begin{align}\label{eq:global-conclusion}
        \left\|\sup_{r\in(0,\infty)^d}|K_{r}*f|\right\|_2
        \lesim_{d,R}
        \bigl[(1+M)^{d/2}A+2^{-M}B\bigr]\|f\|_2.
    \end{align}
\end{lemma}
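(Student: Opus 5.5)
The plan is to write every radius as $r=2^k t$ with $k\in\mathbb Z^d$ and $t\in[1,2]^d$, and then to split $f$ coordinatewise in frequency relative to the scale $2^k$. Fix a one–dimensional Littlewood–Paley family $\{P^{(i)}_j\}_{j\in\mathbb Z}$ acting in the variable $y_i$, and for a multi-index $j$ set $P_j=P^{(1)}_{j_1}\cdots P^{(d)}_{j_d}$. For fixed $k$ and each coordinate $i$, decompose the identity as
\begin{align*}
\Id=P^{(i)}_{<-k_i-M}+\sum_{|j_i+k_i|\le M}P^{(i)}_{j_i}+P^{(i)}_{>-k_i+M},
\end{align*}
and take the tensor product of these decompositions over $i$. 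This expresses $K_{2^kt}*f$ as a sum of $3^d$ types of terms. In each coordinate $i$, a term is either \emph{resonant} ($|j_i+k_i|\le M$), \emph{high} or \emph{low}.

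\textbf{Fully resonant part.} This carries the factor $(1+M)^{d/2}A$. Write $m=j+k\in[-M,M]^d$. Cauchy--Schwarz in $m$ gives
\begin{align*}
\sup_k\Bigl|\sum_{m}K_{2^kt}*P_{m-k}f\Bigr|^2\le(2M+1)^d\sum_k\sum_m\sup_{t}|K_{2^kt}*P_{m-k}f|^2,
\end{align*}
where the supremum over $k$ is replaced by the sum over $k$. The hypothesis \eqref{eq:global-hyp} is invariant under the anisotropic rescaling $y\mapsto\delta_{2^k}y$ in $L^2$. Hence each summand has $L^2$ norm squared at most $A^2\|P_{m-k}f\|_2^2$. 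For fixed $m$, the bounded overlap of the $P_j$ gives $\sum_k\|P_{m-k}f\|_2^2\lesim\|f\|_2^2$. Summing over the $(2M+1)^d$ values of $m$ and taking square roots yields $(2M+1)^{d/2}A\|f\|_2$, and I will use this resonant bound repeatedly below.

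\textbf{High coordinates.} Terms with some coordinate high should produce $2^{-M}B$. In a high coordinate, $P^{(i)}_{j_i}$ with $j_i+k_i>M$ is a derivative of a Schwartz bump at frequency $2^{j_i}$. Moving that derivative onto $K_{2^kt}$ in $y_i$ costs $(2^{k_i+j_i})^{-1}\|\partial_iK\|_\infty\le 2^{-(j_i+k_i)}B$. The resulting expression is dominated by the strong maximal function of $\tilde P_jf$, using the support condition $\operatorname{supp}K\subset[-R,R]^d$; the $R$-dependence enters here. I will sum the geometric series in $j_i+k_i>M$ inside the square function, controlling the supremum over $k$ by $\ell^2$ sums as before. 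This gives $\lesim_{d,R}2^{-M}B\|f\|_2$ times whatever bound the remaining coordinates contribute.

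\textbf{Low coordinates.} This is the main obstacle, because a low-frequency piece yields no decay from the smoothness of $K$. There I will compare $K_{2^kt}*P^{(i)}_{<-k_i-M}f$ with the function obtained by replacing $K$ in the $y_i$ variable by $\delta_0(y_i)\otimes\int K\,dy_i$. Since $P^{(i)}_{<-k_i-M}f$ is smooth at scale $2^{k_i+M}\gg 2^{k_i}$ in $y_i$, the mean value theorem makes the error $O(2^{-M})$ times maximal functions, with constant $\lesim_R B$. The main term is then a lower-dimensional object of the same kind, with $y_i$ removed and a low-pass maximal operator in $y_i$ bounded on $L^2$. That object is treated by induction on the number of low coordinates. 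In the fully low case, the main term is $|\int K|\,\sup_k|P_{<-k-M}f|$. Testing \eqref{eq:global-hyp} on a wide bump shows $|\int K|\lesim A$. A similar test applied to the partially averaged kernels should supply the corresponding constants in the mixed cases; this is the step I would check most carefully.

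Collecting the $3^d$ term types and absorbing constants depending on $d$ and $R$ then gives \eqref{eq:global-conclusion}.
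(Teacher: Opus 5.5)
\textbf{The gap is in your fully resonant estimate, which as written gives $(2M+1)^{d}A$, not $(2M+1)^{d/2}A$.} Your Cauchy--Schwarz step in $m$ already costs the factor $(2M+1)^d$. Summing $\sum_k\|P_{m-k}f\|_2^2\lesim\|f\|_2^2$ over the $(2M+1)^d$ values of $m$ then costs that factor again:
\[
\Bigl\|\sup_{k,t}\Bigl|\sum_m K_{2^kt}*P_{m-k}f\Bigr|\Bigr\|_2^2\leq(2M+1)^d\sum_m\sum_k A^2\|P_{m-k}f\|_2^2\lesim(2M+1)^{2d}A^2\|f\|_2^2 .
\]
This loss is built into the route. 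Splitting the resonant block pointwise, inside the supremum, throws away the orthogonality of the pieces $P_{m-k}f$ for fixed $k$.

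The fix is not to split. For each $k$, apply the rescaled hypothesis \eqref{eq:global-hyp} to the whole block $Q_kf=\sum_{m\in[-M,M]^d}P_{m-k}f$, and dominate $\sup_k$ by the $\ell^2$ sum in $k$. Then use Plancherel. The symbols of the $Q_k$ are uniformly bounded, and each $\xi$ lies in the support of only $O((1+M)^d)$ of them, so $\sum_k\|Q_kf\|_2^2\lesim(1+M)^d\|f\|_2^2$. This is exactly the paper's argument in \eqref{eq:middle-square}--\eqref{eq:middle-bound}. You need the same correction wherever you reuse the resonant bound for a marginal $K^I$. Your weaker bound $(1+M)^dA$ would still suffice for Theorem~\ref{thm:main}, since there $M\sim j$ and only the factor $2^{-aj}$ matters, but it does not prove the lemma as stated.

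With that repaired, the rest of your plan is sound and close to the paper, which splits each coordinate at $2^{-k_i\mp M}$ in the same way. Two points in your high part need care:
\begin{itemize}
\item The other coordinates should be absorbed into $\Mrec$ uniformly in $k$, since all their projections are dominated by maximal operators. They cannot contribute a separate resonant bound, because the hypothesis says nothing about $\partial_iK$.
\item You must sum over $\ell=j_i+k_i>M$ outside the supremum, using $\sup_{k_i}G_{\ell-k_i}\leq(\sum_jG_j^2)^{1/2}$.
\end{itemize}
The paper avoids Littlewood--Paley pieces here altogether. It writes the high term as $\prod_i(\Id-L_{i,k})(R_{k,t}*f)$, where $R_{k,t}=K_{2^kt}-K_{2^kt}*\Psi_a$, and uses the pointwise bound $|R_{k,t}|\lesim 2^{-M}BW_{2^kt}$; no square function is needed.

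For the low frequencies, the paper keeps the full frequency range in $I=J^c$ and applies the lemma itself to $K^I$, by induction on $d$. It obtains the local constant $A$ for $K^I$ by exactly the testing argument you anticipated (\eqref{eq:marginal-local}, \eqref{eq:mass}). Your type-by-type peeling needs only the corrected resonant estimate for $K^I$, so it avoids the induction on $d$, at the cost of tracking $3^d$ term types.
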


To prove this lemma, we write the radii as $r=2^kt$ with
$t\in[1,2]^d$. For each $k$, we divide the Fourier transform of
the input into low, middle and high frequencies in each coordinate.
The middle frequencies overlap at most $O((1+M)^d)$ times as $k$
varies. Plancherel's theorem and the assumed local bound then
give the term $(1+M)^{d/2}A$.

At low frequencies in one coordinate, we approximate $K$ by its
integral in that coordinate. The resulting kernel satisfies the
same local estimate in a lower dimension, so we can proceed by
induction on $d$. At high frequencies, smoothing $K$ at relative
scale $2^{-M}$ leaves an error controlled by $2^{-M}B$.
We give the proof in Section~\ref{sec:global}.

For \eqref{eq:normal-kernel}, $B\lesim\lambda^2$. Choose $M$ to be
a sufficiently large multiple of $\log(2+\lambda)$. Then
\begin{align}\label{eq:intro-global}
    \left\|\sup_{r>0}|K_{\lambda,r}*f|\right\|_2
    \lesim
    \bigl(\log(2+\lambda)\bigr)^{n/2}
    \lambda^{-(n-3)/4}\|f\|_2.
\end{align}
On the other hand, the pointwise estimate
$|K_\lambda|\lesim\lambda\ind_{[-2,2]^n}$ gives an $L^q$ bound
$O_q(\lambda)$ for every $q>1$. When
$(n+1)/(n-1)<p<2$, interpolation with $q$ sufficiently close to
one gives a negative power of $\lambda$, and hence a summable
bound over the dyadic pieces. For finite $p>2$, we interpolate
\eqref{eq:intro-global} with the uniform $L^\infty$ bound.
This proves Theorem~\ref{thm:main}; see
Section~\ref{sec:main-proof} for the details.

\subsection{Comparison with Hickman and Zahl and with Stein}\label{sec:comparison}

The main new ingredient is the oscillatory $L^2$ estimate
\eqref{eq:local-bound}. We explain its relation to the argument
of Hickman and Zahl. They consider the ellipsoidal annuli and
the associated maximal operator
\begin{align}\label{eq:HZ-annuli}
    E^\delta(x;r)&=\{y\in\R^n:|F_{x,r}(y)|<\delta\},\\
    M^\delta f(x)&=\sup_{r\in[1,2]^n}
    \frac{1}{|E^\delta(x;r)|}\int_{E^\delta(x;r)}|f(y)|\,dy.
\end{align}
For $n\geq3$, their discretised $L^2$ bound is
\begin{align}\label{eq:HZ-thick}
    \|M^\delta f\|_2\lesim_\epsilon\delta^{-\epsilon}\|f\|_2,
    \qquad 0<\delta<1,\quad\epsilon>0.
\end{align}
Their proof is geometric. More precisely, they estimate the intersections of thin ellipsoidal
annuli. They first restrict the centres to lines and remove
the sets where higher order tangency can occur
\cite[Sections~2--3]{HZ}. They then combine \eqref{eq:HZ-thick}
with the Sobolev estimates of \cite{LLO} and use multiparameter
Littlewood--Paley theory to pass from $\Mloc$ to $\Mst$
\cite[Section~4]{HZ}.

Our proof is Fourier analytic and uses the cancellation in $K_\lambda$ to obtain decay
in $\lambda$. We restrict the centres to $x_n=h$ and set
$s=(y_n-h)^2$ on each patch. The resulting $TT^*$ phase is a sum
of $n-1$ quadratic terms, which gives \eqref{eq:intro-product}
and the decay $\lambda^{-(n-3)/4}$. This estimate holds uniformly
for all measurable choices of radii, including those for which
higher order tangency occurs. Thus the difference between the
two proofs lies in the local $L^2$ estimate: our argument uses
oscillation in the defining function, while theirs estimates
volumes of intersections.

The use of cancellation and interpolation also appears in
Stein's proof of the spherical maximal theorem for $n\geq3$
\cite{Stein}. Stein used Fourier decay and Plancherel's theorem
to estimate a square function, proved a maximal $L^2$ bound for
an analytic family of averaging operators, and applied complex
interpolation. Here we obtain the $L^2$ bound from a quadratic
$TT^*$ argument and interpolate each normal frequency piece.

Notice that the decay in \eqref{eq:local-bound} concerns the
oscillatory pieces of the surface measure. The operator $M^\delta$
averages against positive measures of mass one, so its norm cannot
tend to zero as $\delta\to0$; this follows by testing on the
indicator of a large cube. For $n\geq4$, the decay of the
oscillatory pieces is enough to offset their $O_q(\lambda)$
bound on $L^q$. As $q$ tends to one, the exponent obtained by
interpolation tends to
\begin{align}\label{eq:comparison-exponent}
    \frac{n+1}{2p}-\frac{n-1}{2},
\end{align}
which is negative exactly when $p>(n+1)/(n-1)$. This is how we
obtain the range below $p=2$.

The passage to arbitrary radii in Lemma~\ref{lem:global} introduces
only the factor $(\log(2+\lambda))^{n/2}$. Since the power of
$\lambda$ is unchanged, we obtain the same range of $p$ for
the full operator.

\subsection{AI usage}
The use of LLM models was central to the current work. The main proof was generated by GPT-6 through several rounds of discussion with the author. In particular, the author asked whether a $L^2$ decay estimates like Proposition \ref{prop:local} is possible. The author worked through the argument, clarified the steps, and organized and wrote the present manuscript. GPT-6 was also
used for editing and proofreading throughout the paper. The author takes responsibility for the mathematical claims and the final version of the paper.

\section{Preliminaries}\label{sec:preliminaries}

We collect the basic facts used in the proof. The oscillatory
estimates and the $TT^*$ argument will be used in
Section~\ref{sec:quadratic}. The maximal function and Fourier
estimates will be used to pass from bounded radii to arbitrary
radii in Section~\ref{sec:global}.

\subsection{Notation and Fourier transforms}

Throughout the paper, $A\lesim B$ means $A\leq CB$ for a constant
$C$ that may depend on fixed parameters but is independent of
the frequency and dilation scales. Products and quotients of
vectors are coordinatewise. For $k\in\Z^d$, write
$2^k=(2^{k_1},\ldots,2^{k_d})$; the notation $r>0$ means that
every coordinate of $r$ is positive.
We denote the characteristic function of a set $E$ by $\ind_E$.

We use the Fourier transform
\begin{align}
    \widehat f(\xi)=\int_{\R^d}e^{-2\pi i x\cdot\xi}f(x)\,dx.
\end{align}
The inverse Fourier transform is
$\check g(x)=\int_{\R^d}e^{2\pi i x\cdot\xi}g(\xi)\,d\xi$.

We will use the following consequence. Suppose that $P_k$ are a countable family of
Fourier multipliers with symbols $p_k$, where $|p_k|\leq C$,
and at most $N$ symbols are nonzero at each frequency. Then
\begin{align}\label{eq:prelim-overlap}
    \sum_k\|P_kf\|_2^2
    =\int_{\R^d}\sum_k|p_k(\xi)|^2|\widehat f(\xi)|^2\,d\xi
    \leq C^2N\|f\|_2^2.
\end{align}
This is the estimate that allows us to sum the middle frequency
pieces in Section~\ref{sec:global}.

\subsection{Maximal functions and interpolation}

Let $\Mrec$ denote the centred strong rectangular maximal operator,
\begin{align}
    \Mrec f(x)=\sup_{v>0}\frac{1}{\prod_{i=1}^d 2v_i}
    \int_{\{|y_i-x_i|\leq v_i,\ 1\leq i\leq d\}}|f(y)|\,dy.
\end{align}
We have
\begin{align}\label{eq:prelim-rectangular}
    \|\Mrec f\|_p\lesim_{d,p}\|f\|_p,
    \qquad 1<p\leq\infty.
\end{align}
The bounds hold when the supremum is taken in only
some of the coordinates.

We also use a consequence for convolution kernels. If
\begin{align*}
    |H_v(y)|\leq C\prod_{i=1}^d
    v_i^{-1}(1+|y_i|/v_i)^{-3},\qquad v>0,
\end{align*}
then dyadic decomposition in each coordinate gives
\begin{align}\label{eq:prelim-product-max}
    |H_v*f|\lesim_d C\Mrec f.
\end{align}
The constant is independent of $v$, and the same bound holds
for convolution in a subset of the coordinates.

To estimate a supremum of convolutions, it suffices to consider
a measurable choice of kernel at each point. Indeed, for
a finite collection $\{K_{r^{(1)}},\ldots,K_{r^{(N)}}\}$ and a
fixed $f$, choose the first index at which
$|K_{r^{(j)}}*f(x)|$ is largest. This gives a measurable function
$r(x)$ with
\begin{align*}
    \max_{1\leq j\leq N}|K_{r^{(j)}}*f(x)|
    =|K_{r(x)}*f(x)|.
\end{align*}
A bound uniform in all measurable choices of $r(x)$ therefore
implies the same bound for the finite maximum. We then pass to
a countable dense set of radii by monotone convergence.
For the kernels used below, continuity of convolution in the
radii gives the full supremum.

For interpolation, fix the choice $r(x)$ first. The resulting
operator $L$ is linear. If its $L^{p_0}$ and $L^{p_1}$ norms are
bounded by $A_0$ and $A_1$, respectively, where
$1\leq p_0,p_1\leq\infty$, the Riesz--Thorin theorem
gives
\begin{align}\label{eq:prelim-interpolation}
    \|Lf\|_p\leq A_0^{1-\theta}A_1^\theta\|f\|_p,
    \qquad
    \frac1p=\frac{1-\theta}{p_0}+\frac{\theta}{p_1},
    \quad 0<\theta<1.
\end{align}
When the endpoint bounds are uniform in $r(x)$, the interpolated
bound is also uniform. The preceding argument then recovers the
maximal operator.

\subsection{One-dimensional oscillatory integrals}

We use the first and second derivative forms of van der Corput's
lemma. We state them without an amplitude, since the amplitude
in Section~\ref{sec:quadratic} will be handled by integration
by parts.

\begin{lemma}\label{lem:prelim-vdc}
    Let $J$ be a bounded interval, let $\phi\in C^2(J)$ be
    real-valued, and let $\eta>0$.
    If $\phi'$ is monotone and $|\phi'|\geq\eta$ on $J$, then
    \begin{align}\label{eq:prelim-vdc-first}
        \left|\int_J e^{2\pi i\phi(t)}\,dt\right|
        \lesim\eta^{-1}.
    \end{align}
    If $|\phi''|\geq\eta$ on $J$, then
    \begin{align}\label{eq:prelim-vdc-second}
        \left|\int_J e^{2\pi i\phi(t)}\,dt\right|
        \lesim\eta^{-1/2}.
    \end{align}
    Both estimates hold on every subinterval of $J$, with the
    same absolute constants.
\end{lemma}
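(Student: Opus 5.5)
This is the classical van der Corput lemma, and I will prove it in two steps. The first derivative bound comes from one integration by parts. The second derivative bound then follows from the first by splitting $J$ around the point where $|\phi'|$ is smallest.

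For \eqref{eq:prelim-vdc-first}, note that $\phi'$ is continuous and $|\phi'|\geq\eta>0$, so $\phi'$ has constant sign on $J$. On $J=[a,b]$ write
\begin{align*}
    e^{2\pi i\phi}=\frac{1}{2\pi i\phi'}\frac{d}{dt}e^{2\pi i\phi}
\end{align*}
and integrate by parts. The boundary terms are bounded by $2/(2\pi\eta)$. The remaining integral is
\begin{align*}
    \int_J e^{2\pi i\phi}\frac{d}{dt}\Bigl(\frac{1}{2\pi i\phi'}\Bigr)dt .
\end{align*}
Since $\phi'$ is monotone and does not change sign, $1/\phi'$ is monotone. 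Hence
\begin{align*}
    \int_J\Bigl|\frac{d}{dt}\frac{1}{\phi'}\Bigr|dt=\Bigl|\frac{1}{\phi'(b)}-\frac{1}{\phi'(a)}\Bigr|\leq\eta^{-1}.
\end{align*}
This gives the bound with an absolute constant, for example $3/(2\pi)\cdot\eta^{-1}$. Here $\phi\in C^2$ ensures that $1/\phi'$ is $C^1$. The same argument applies verbatim on any subinterval, because the hypotheses restrict to subintervals.

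For \eqref{eq:prelim-vdc-second}, the function $\phi''$ is continuous with $|\phi''|\geq\eta$, so it has constant sign. Replacing $\phi$ by $-\phi$ if necessary (this conjugates the integral), we may assume $\phi''\geq\eta$. Then $\phi'$ is strictly increasing, and there is at most one point $c\in\overline J$ where $\phi'$ vanishes; otherwise take $c$ to be the endpoint at which $|\phi'|$ is smallest. Fix a parameter $\delta>0$.
\begin{itemize}
    \item On $J\cap(c-\delta,c+\delta)$, bound the integral trivially by $2\delta$.
    \item On each of the at most two remaining subintervals, $\phi'$ is monotone and $|\phi'|\geq\eta\delta$, by the mean value theorem applied to $\phi'$ with $\phi''\geq\eta$. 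The first part then gives a bound $\lesim(\eta\delta)^{-1}$ on each.
\end{itemize}
Choosing $\delta=\eta^{-1/2}$ gives the total bound $\lesim\eta^{-1/2}$. If $J$ is shorter than $\delta$, the trivial bound already suffices. Again every hypothesis passes to subintervals, so the constants are the same there.

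There is no real obstacle. The only points needing care are that the constants are absolute, independent of $J$, $\phi$ and $\eta$, and the case analysis locating $c$ when $\phi'$ does not vanish on $J$. Both are handled above by the monotonicity of $\phi'$.
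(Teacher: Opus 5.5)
Your proof is correct and follows essentially the same route as the paper: the first estimate comes from one integration by parts, with the total variation of $1/\phi'$ controlled by monotonicity. For the second, you discard an interval of length $O(\eta^{-1/2})$ where $|\phi'|$ is small and apply the first estimate with $\sqrt\eta$ in place of $\eta$ on the at most two remaining pieces; the paper phrases this via the sublevel set $\{|\phi'|\leq\sqrt\eta\}$, which matches your choice $\delta=\eta^{-1/2}$.
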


\begin{proof}
    For the first estimate, write
    $e^{2\pi i\phi}=(2\pi i\phi')^{-1}(e^{2\pi i\phi})'$
    and integrate by parts. The boundary terms are
    $O(\eta^{-1})$, and monotonicity of $\phi'$ bounds the total
    variation of $1/\phi'$ by $2\eta^{-1}$.

    For the second estimate, $\phi''$ has a fixed sign, so $\phi'$
    is monotone. The set where $|\phi'|\leq\sqrt\eta$ is an interval
    of length at most $2\eta^{-1/2}$. Its complement consists of
    at most two intervals, on each of which the first estimate
    applies with $\sqrt\eta$ in place of $\eta$.
\end{proof}

\subsection{The $TT^*$ argument and Schur's test}

Suppose an integral operator is written as
\begin{align*}
    Tg(x)=\int A(x,y)g(y)\,dy.
\end{align*}
Assume initially that $A$ is bounded and compactly supported.
With respect to Lebesgue measure, the kernel of $TT^*$ is
\begin{align}\label{eq:prelim-ttstar}
    \mathcal H(x,z)=\int A(x,y)\overline{A(z,y)}\,dy.
\end{align}
If
\begin{align}\label{eq:prelim-schur-hyp}
    \sup_x\int|\mathcal H(x,z)|\,dz\leq D,
    \qquad
    \sup_z\int|\mathcal H(x,z)|\,dx\leq D,
\end{align}
Schur's test gives
\begin{align}\label{eq:prelim-schur}
    \|T\|_{2\to2}\leq D^{1/2}.
\end{align}
For a detailed proof, see \cite{Folland}.

\subsection{The coarea formula near the sphere}

For $F(u)=|u|^2-1$, the coarea formula gives
\begin{align*}
    \int_{\R^n}a(u)\delta(F(u))\,du
    =\int_{S^{n-1}}\frac{a(\omega)}{|\nabla F(\omega)|}\,dS(\omega)
    =\frac12\int_{S^{n-1}}a(\omega)\,dS(\omega),
\end{align*}
for continuous compactly supported $a$. Here $dS$ is Euclidean
surface measure and $\delta$ is the Dirac distribution.
Thus the normalized surface measure satisfies
\begin{align}\label{eq:prelim-coarea}
    \sigma=\frac{2}{|S^{n-1}|}\delta(|u|^2-1)\,du.
\end{align}
This identity explains both the use of Fourier inversion in
$|u|^2-1$ and the normalization of the kernels in
Section~\ref{sec:main-proof}.

\section{A quadratic oscillatory integral estimate}\label{sec:quadratic}

The local estimate in Section~\ref{sec:local} will follow from
Lemma~\ref{lem:graph} below, with $m=n-1$. We first prove the
oscillatory integral estimate needed to bound its $TT^*$ kernel.

\subsection{A product estimate for quadratic phases}

The difference of two quadratic phases need not have a large second
derivative. When the quadratic terms nearly cancel, we use the
separation of their centres to control the first derivative instead.
The following lemma applies this observation in each coordinate.

\begin{lemma}\label{lem:product}
    Let $m\geq1$, $R\geq1$ and $\lambda\geq1$. Suppose
    \begin{align}
        c_0\lambda\leq|\tau|\leq C_0\lambda,\qquad
        |\sigma|\leq C_0\lambda,\qquad
        b_-\leq b_i,c_i\leq b_+
    \end{align}
    for fixed positive constants $c_0,C_0,b_-,b_+$.
    For $x,z\in\R^m$, put
    \begin{align}
        P(u)=\sum_{i=1}^m
        \bigl[\tau b_i(u_i-x_i)^2-\sigma c_i(u_i-z_i)^2\bigr].
    \end{align}
    If $a\in C_c^m(\R^m)$ is supported in $z+[-R,R]^m$, then
    \begin{align}\label{eq:product}
        \left|\int_{\R^m}e^{2\pi iP(u)}a(u)\,du\right|
        \lesim
        \|\partial_1\cdots\partial_m a\|_{L^1}
        \prod_{i=1}^m(1+\lambda|x_i-z_i|)^{-1/2}.
    \end{align}
    The implicit constant depends only on $m$, $R$ and the fixed
    constants above.
\end{lemma}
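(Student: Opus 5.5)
The phase $P$ is separable: $P(u)=\sum_i\phi_i(u_i)$ with
\[
\phi_i(t)=\tau b_i(t-x_i)^2-\sigma c_i(t-z_i)^2 .
\]
The amplitude is not a product, so we cannot factor the integral directly. Instead, we use the standard trick of writing $a$ through its mixed derivative:
\[
a(u)=\int_{\prod_i(-\infty,u_i]}\partial_1\cdots\partial_m a(v)\,dv ,
\]
which is valid since $a$ is compactly supported. Substituting and using Fubini gives
\[
\int e^{2\pi iP(u)}a(u)\,du=\int \partial_1\cdots\partial_m a(v)\prod_{i=1}^m\Bigl(\int_{v_i}^{z_i+R}e^{2\pi i\phi_i(t)}\,dt\Bigr)\,dv .
\]
Here the $u_i$-range is truncated to $[v_i,z_i+R]$, because $a$, and hence the integrand, vanishes for $u_i>z_i+R$. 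It therefore suffices to prove the one-dimensional bound
\[
\Bigl|\int_J e^{2\pi i\phi_i(t)}\,dt\Bigr|\lesssim(1+\lambda|x_i-z_i|)^{-1/2}
\]
uniformly over subintervals $J\subset[z_i-R,z_i+R]$. Taking the supremum over $v$ and integrating $|\partial_1\cdots\partial_m a|$ then yields \eqref{eq:product}.

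For the one-dimensional bound, note first that the trivial estimate $|J|\le 2R$ handles $\lambda|x_i-z_i|\lesssim 1$. Assume therefore that $d:=|x_i-z_i|\ge 1/\lambda$. We compute
\[
\phi_i''=2(\tau b_i-\sigma c_i)=:2\kappa ,\qquad
\phi_i'(t)=2\kappa t-2(\tau b_ix_i-\sigma c_iz_i).
\]
Rewriting the derivative as
\[
\phi_i'(t)=2\kappa(t-z_i)-2\tau b_i(x_i-z_i),
\]
we split into two cases according to the size of $\kappa$, with $\epsilon>0$ a small constant depending on $c_0,b_\pm,R$.

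\emph{Case 1: $|\kappa|\ge\epsilon\lambda d$.} The second-derivative form of Lemma~\ref{lem:prelim-vdc}, valid on every subinterval, gives the bound $(\lambda d)^{-1/2}$.

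\emph{Case 2: $|\kappa|<\epsilon\lambda d$.} On $J$ we have $|t-z_i|\le R$, while $|\tau b_i(x_i-z_i)|\ge c_0b_-\lambda d$. Choosing $\epsilon$ so that $2\epsilon R\le c_0b_-$, we get $|\phi_i'|\ge c_0b_-\lambda d$ on $J$. Since $\phi_i'$ is affine, it is monotone, so the first-derivative form of Lemma~\ref{lem:prelim-vdc} gives $(\lambda d)^{-1}\le(\lambda d)^{-1/2}$.

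In both cases we obtain $(1+\lambda d)^{-1/2}$. The constants depend only on $c_0,C_0,b_\pm,R$; the upper bounds on $|\tau|,|\sigma|$ are not even needed here.

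The main obstacle is the first step: reducing a non-product amplitude to a product of one-dimensional integrals over variable subintervals. This works because Lemma~\ref{lem:prelim-vdc} holds uniformly on subintervals, and that uniformity is exactly why the lemma is stated that way. The one-dimensional dichotomy itself is routine once the split is made at the threshold $\kappa\sim\lambda d$ rather than at $\kappa\sim\lambda$.
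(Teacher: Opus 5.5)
Your proof is correct and follows the paper's argument. It uses the same one-dimensional dichotomy, uniform over subintervals: second-derivative van der Corput when $|\tau b_i-\sigma c_i|$ is at least a small multiple of $\lambda|x_i-z_i|$, and otherwise first-derivative van der Corput, using $|\tau b_i|\geq c_0b_-\lambda$. Writing $a$ as the integral of $\partial_1\cdots\partial_m a$ and applying Fubini is equivalent to the paper's coordinatewise integration by parts against the primitives $G_i$; the only difference is that the paper enlarges the interval to $[z_i-R-1,z_i+R+1]$ to remove boundary terms, which your formulation does not need.
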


\begin{proof}
    Fix a coordinate $i$, and write
    \begin{align*}
        P_i(t)=\tau b_i(t-x_i)^2-\sigma c_i(t-z_i)^2,\qquad
        v_i=x_i-z_i,\qquad A_i=\tau b_i-\sigma c_i.
    \end{align*}
    Thus $P(u)=\sum_iP_i(u_i)$, and
    \begin{align}\label{eq:phase-derivatives}
        P_i'(t)=2A_i(t-z_i)-2\tau b_i v_i,\qquad
        P_i''(t)=2A_i.
    \end{align}
    We will show that every subinterval $J$ of
    $I_i=[z_i-R-1,z_i+R+1]$ satisfies
    \begin{align}\label{eq:primitive}
        \left|\int_J e^{2\pi iP_i(t)}\,dt\right|
        \lesim(1+\lambda|v_i|)^{-1/2}.
    \end{align}
    Using the slightly larger interval $I_i$ will allow us to
    integrate by parts without boundary terms later.

    If $\lambda|v_i|\leq1$, the length of $I_i$ gives
    \eqref{eq:primitive}. Suppose $\lambda|v_i|>1$, and fix
    $0<\kappa<c_0b_-/(2(R+1))$. If
    $|A_i|\geq\kappa\lambda|v_i|$, then
    $|P_i''|\geq2\kappa\lambda|v_i|$, so the second derivative
    estimate \eqref{eq:prelim-vdc-second} gives the required bound.

    If $|A_i|<\kappa\lambda|v_i|$, the first term in $P_i'$ is
    small on $I_i$. The second term therefore dominates, since
    $|\tau b_i|\geq c_0b_-\lambda$. More precisely,
    \begin{align}
        |P_i'(t)|
        &\geq2c_0b_-\lambda|v_i|-2|A_i|(R+1)
        \geq c_0b_-\lambda|v_i|,\quad \forall t\in I_i.
    \end{align}
    The derivative $P_i'$ is affine and hence monotone. The first
    derivative estimate \eqref{eq:prelim-vdc-first} therefore gives
    the stronger bound $O((\lambda|v_i|)^{-1})$. This proves
    \eqref{eq:primitive} in both cases.

    It remains to include the amplitude $a$. For $t\in I_i$, set
    \begin{align}
        G_i(t)=\int_{z_i-R-1}^t e^{2\pi iP_i(v)}\,dv.
    \end{align}
    By \eqref{eq:primitive},
    $\|G_i\|_{L^\infty(I_i)}\lesim(1+\lambda|v_i|)^{-1/2}$.
    Since $G_i'=e^{2\pi iP_i}$, integration by parts once in each
    variable gives
    \begin{align}
        \int_{\R^m}e^{2\pi iP(u)}a(u)\,du
        =(-1)^m\int_{\prod_i I_i}
        \prod_{i=1}^mG_i(u_i)\,
        \partial_1\cdots\partial_m a(u)\,du.
    \end{align}
    All boundary terms vanish because $a$ is supported in the
    interior of $\prod_i I_i$. Taking absolute values and using
    the bounds for $G_i$ proves \eqref{eq:product}.
\end{proof}

\subsection{The \texorpdfstring{$L^2$}{L2} estimate for quadratic graphs}

We now consider an operator whose input variables are
$(u,s)\in\R^m\times\R$ and whose output variable is $x\in\R^m$.
For each $x$, the phase is determined by a quadratic graph
$s=\Phi_x(u)$. Its coefficients may depend measurably on $x$.
The proof differentiates only in $(u,s)$.

\begin{lemma}\label{lem:graph}
    Let $m\geq1$, $R\geq1$ and $\lambda\geq1$. Suppose
    \begin{align}\label{eq:quadratic-graph}
        \Phi_x(u)=b_0(x)-\sum_{i=1}^m b_i(x)(u_i-x_i)^2,
        \qquad 0<b_-\leq b_i(x)\leq b_+,
    \end{align}
    where the coefficients are real and measurable in $x$.
    Let $a_x(u,s)$ be measurable in $x$, supported in
    \begin{align}\label{eq:amplitude-support}
        |u-x|_\infty\leq R,\qquad |s|\leq R,
    \end{align}
    and have uniformly bounded derivatives in $(u,s)$ through
    order $m+4$. Suppose $q_x(\tau)$ is measurable in $(x,\tau)$
    and satisfies
    \begin{align}\label{eq:q-support}
        |q_x(\tau)|\leq C_0
        \ind_{\{c_0\lambda\leq|\tau|\leq C_0\lambda\}}.
    \end{align}
    Define
    \begin{align}\label{eq:graph-operator}
        T_\lambda g(x)
        =\int_{\R^m}\int_\R a_x(u,s)g(u,s)
        \int_\R q_x(\tau)e^{2\pi i\tau(s-\Phi_x(u))}
        \,d\tau\,ds\,du.
    \end{align}
    Then
    \begin{align}\label{eq:graph-bound}
        \|T_\lambda g\|_{L^2(\R^m)}
        \lesim\lambda^{1/2-m/4}\|g\|_{L^2(\R^{m+1})}.
    \end{align}
    The constant depends only on the stated bounds. No derivatives
    of the coefficients with respect to $x$ are needed.
\end{lemma}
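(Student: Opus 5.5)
We prove \eqref{eq:graph-bound} by the $TT^*$ argument of Section~\ref{sec:preliminaries}, with Lemma~\ref{lem:product} used to bound the kernel and Schur's test \eqref{eq:prelim-schur} used to finish. Write $T_\lambda g(x)=\int A(x;u,s)g(u,s)\,du\,ds$, where
\begin{align*}
    A(x;u,s)=a_x(u,s)\int_\R q_x(\tau)e^{2\pi i\tau(s-\Phi_x(u))}\,d\tau.
\end{align*}
Then $|A|\lesim\lambda$, and $A$ is supported where $|u-x|_\infty\leq R$ and $|s|\leq R$. The kernel of $T_\lambda T_\lambda^*$ is
\begin{align*}
    \mathcal H(x,z)=\iint q_x(\tau)\overline{q_z(\sigma)}
    \int_{\R^m}\int_\R a_x(u,s)\overline{a_z(u,s)}
    e^{2\pi i[(\tau-\sigma)s-\tau\Phi_x(u)+\sigma\Phi_z(u)]}\,ds\,du\,d\tau\,d\sigma.
\end{align*}
It vanishes unless $|x-z|_\infty\leq2R$. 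We aim to show
\begin{align*}
    |\mathcal H(x,z)|\lesim\lambda\prod_{i=1}^m(1+\lambda|x_i-z_i|)^{-1/2}\ind_{\{|x-z|_\infty\leq2R\}}.
\end{align*}
Integrating one factor over $|x_i-z_i|\leq2R$ gives $O(\lambda^{-1}\log\lambda)$ in the naive form. That is too weak by a logarithm, so we instead use $\int(1+\lambda|v|)^{-1/2}dv\lesim\lambda^{-1/2}$ on a bounded interval; this holds because $\int_0^{2R}(\lambda v)^{-1/2}dv\lesim\lambda^{-1/2}$. Schur's test then gives $D\lesim\lambda\cdot\lambda^{-m/2}$, and taking the square root yields $\lambda^{1/2-m/4}$.

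For the kernel bound, fix $x,z$ and first integrate in $s$. The phase in $s$ is $(\tau-\sigma)s$, so $s$-integration of the amplitude $a_x\overline{a_z}$ produces a factor
\begin{align*}
    \widehat{(a_x\overline{a_z})}(\cdot,\sigma-\tau)
\end{align*}
in the $s$ variable. Integrating by parts, using the $s$-derivatives of $a_x\overline{a_z}$ up to order $2$ (order $m+4$ is available), bounds this factor by $(1+|\tau-\sigma|)^{-2}$. The same bound holds uniformly after applying $\partial_1\cdots\partial_m$ in $u$, since at most $m+2$ derivatives in total are used. Thus, for fixed $(\tau,\sigma)$, the $u$-integral has the form of Lemma~\ref{lem:product}. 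The phase is $P(u)$ with $b_i=b_i(x)$ and $c_i=b_i(z)$, plus the constant terms $-\tau b_0(x)+\sigma b_0(z)$, which do not affect absolute values. The amplitude is
\begin{align*}
    a(u)=\int a_x(u,s)\overline{a_z(u,s)}e^{2\pi i(\tau-\sigma)s}\,ds,
\end{align*}
which is supported in $z+[-R,R]^m$ and satisfies $\|\partial_1\cdots\partial_m a\|_{L^1}\lesim(1+|\tau-\sigma|)^{-2}$.

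The hypotheses of Lemma~\ref{lem:product} require $c_0\lambda\leq|\tau|\leq C_0\lambda$ and $|\sigma|\leq C_0\lambda$, which hold on the support of $q_x(\tau)\overline{q_z(\sigma)}$. The lemma therefore gives, for each fixed $(\tau,\sigma)$, the bound
\begin{align*}
    (1+|\tau-\sigma|)^{-2}\prod_{i=1}^m(1+\lambda|x_i-z_i|)^{-1/2}.
\end{align*}
Integrating against $|q_x(\tau)q_z(\sigma)|\leq C_0^2\ind$ yields
\begin{align*}
    \int_{|\tau|\sim\lambda}\int_\R(1+|\tau-\sigma|)^{-2}\,d\sigma\,d\tau\lesim\lambda,
\end{align*}
which is the desired kernel bound. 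This is the step where the order of integration matters. Doing the $s$-integration first converts the one-dimensional $\lambda$-fold redundancy into the decay $(1+|\tau-\sigma|)^{-2}$, so only one factor of $\lambda$ is lost, not $\lambda^2$. The bound is uniform in the measurable coefficients because only $(u,s)$-derivatives are taken.

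The main obstacle is justifying the $TT^*$ computation and the Fubini interchanges for possibly non-integrable $q$-integrals. We handle this by truncating $g$ to compactly supported bounded functions. We then note that $A$ is bounded by $2C_0^2\lambda$ and compactly supported in $(u,s)$ for each $x$, and, where needed, restrict $x$ to a large ball and let it exhaust $\R^m$ by monotone convergence. Schur's test with both marginals bounded by $\lambda^{1-m/2}$, which holds by the symmetry of the kernel bound in $x,z$, then completes the proof.
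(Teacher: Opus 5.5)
Your proposal is correct and follows the paper's proof essentially step for step. You use $TT^*$, perform the $s$-integral first to get $(1+|\tau-\sigma|)^{-2}$ decay, apply Lemma~\ref{lem:product} with $b_i=b_i(x)$ and $c_i=b_i(z)$, integrate in the frequencies at a cost of a single factor of $\lambda$, and finish with Schur's test. The one blemish is your aside about a ``naive'' $O(\lambda^{-1}\log\lambda)$ bound, which is confused: that bound would come from the exponent $-1$, and it would be stronger than $\lambda^{-1/2}$, not weaker. The estimate you actually use, $\int_{-2R}^{2R}(1+\lambda|v|)^{-1/2}\,dv\lesim\lambda^{-1/2}$, is the right one and is exactly what the paper uses.
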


\begin{proof}
   We first restrict $x$ to a bounded set. The support conditions on $a_x$ then restrict $(u,s)$ to a bounded set as well. The resulting kernel is bounded and compactly supported. All estimates are independent of the chosen set, and we remove the restriction at the end of the proof.

    Let $\cK(x,z)$ be the kernel of $T_\lambda T_\lambda^*$.
    Multiplying the kernel at $x$ by the conjugate kernel at $z$
    gives the phase
    \begin{align*}
        \tau(s-\Phi_x(u))-\sigma(s-\Phi_z(u))
        =(\tau-\sigma)s+\sigma\Phi_z(u)-\tau\Phi_x(u).
    \end{align*}
    We perform the $s$ integral first, writing
    \begin{align}
        B_{x,z}(u,\eta)
        =\int_\R e^{2\pi i\eta s}
        a_x(u,s)\overline{a_z(u,s)}\,ds.
    \end{align}
    Then
    \begin{align}\label{eq:tt-kernel}
        \cK(x,z)
        =\iint q_x(\tau)\overline{q_z(\sigma)}
        \int e^{2\pi i(\sigma\Phi_z(u)-\tau\Phi_x(u))}
        B_{x,z}(u,\tau-\sigma)\,du\,d\tau\,d\sigma.
    \end{align}

    The $s$ integral gives decay in the difference of the two
    frequencies. For $|\eta|\geq1$, we integrate by parts in
    $s$; for $|\eta|<1$, we use the bounded support and amplitude.
    The same argument after differentiating in $u$ gives
    \begin{align}\label{eq:B-derivative}
        |\partial_u^\alpha B_{x,z}(u,\eta)|
        \lesim(1+|\eta|)^{-2},\qquad |\alpha|\leq m.
    \end{align}
    Also, $B_{x,z}$ is supported where
    \begin{align}
        u\in\bigl(x+[-R,R]^m\bigr)\cap
        \bigl(z+[-R,R]^m\bigr).
    \end{align}
    Thus $\cK(x,z)=0$ when $|x-z|_\infty>2R$.

    We next estimate the $u$ integral in \eqref{eq:tt-kernel}.
    Apart from the constant $\sigma b_0(z)-\tau b_0(x)$, its phase is
    \begin{align}
        \sum_{i=1}^m
        \bigl[\tau b_i(x)(u_i-x_i)^2
        -\sigma b_i(z)(u_i-z_i)^2\bigr].
    \end{align}
    This is the phase in Lemma~\ref{lem:product}, with
    $b_i=b_i(x)$ and $c_i=b_i(z)$. The constant term contributes
    a factor of modulus one. By \eqref{eq:B-derivative} and the
    bounded $u$-support,
    \begin{align*}
        \|\partial_1\cdots\partial_m
        B_{x,z}(\,\cdot\,,\tau-\sigma)\|_{L^1(\R^m)}
        \lesim(1+|\tau-\sigma|)^{-2}.
    \end{align*}
    Lemma~\ref{lem:product} therefore gives
    \begin{align}\label{eq:inner-oscillation}
        \left|\int e^{2\pi i(\sigma\Phi_z(u)-\tau\Phi_x(u))}
        B_{x,z}(u,\tau-\sigma)\,du\right|
        \lesim(1+|\tau-\sigma|)^{-2}
        \prod_{i=1}^m(1+\lambda|x_i-z_i|)^{-1/2}.
    \end{align}

    We can now integrate in the frequencies. The decay in
    $\tau-\sigma$ makes the $\sigma$ integral bounded uniformly
    in $\tau$. Only the $\tau$ integral costs a factor of $\lambda$:
    \begin{align}
        \iint_{\substack{|\tau|\leq C_0\lambda\\
        |\sigma|\leq C_0\lambda}}
        (1+|\tau-\sigma|)^{-2}\,d\sigma\,d\tau
        \leq\int_{|\tau|\leq C_0\lambda}\int_\R
        (1+|\tau-\sigma|)^{-2}\,d\sigma\,d\tau
        \lesim\lambda.
    \end{align}
    Substituting into \eqref{eq:tt-kernel}, we obtain
    \begin{align}\label{eq:tt-product}
        |\cK(x,z)|
        \lesim\lambda
        \prod_{i=1}^m(1+\lambda|x_i-z_i|)^{-1/2}
        \ind_{\{|x-z|_\infty\leq2R\}}.
    \end{align}

    Finally, each factor on the right has integral
    \begin{align}
        \int_{-2R}^{2R}(1+\lambda|t|)^{-1/2}\,dt
        \lesim_R\lambda^{-1/2}.
    \end{align}
    Integrating \eqref{eq:tt-product} in either $x$ or $z$ gives
    $\lambda(\lambda^{-1/2})^m=\lambda^{1-m/2}$, up to a
    constant. Schur's test therefore yields
    \begin{align}
        \|T_\lambda T_\lambda^*\|_{2\to2}
        \lesim\lambda^{1-m/2}.
    \end{align}
    Since $\|T_\lambda\|_{2\to2}^2
    =\|T_\lambda T_\lambda^*\|_{2\to2}$, taking a square root
    proves \eqref{eq:graph-bound}. We remove the restriction on
    $x$ by exhausting $\R^m$ with bounded sets and applying
    monotone convergence.
\end{proof}

\section{The local estimate for ellipsoids}\label{sec:local}

\begin{proof}[Proof of Proposition~\ref{prop:local}]
    We decompose $\chi$ into finitely many smooth patches on each of
    which some coordinate has a fixed sign and absolute value at least
    $c>0$. By permuting and reflecting the coordinates, we may assume
    \begin{align}\label{eq:patch}
        \supp\chi\subset\{u:u_n<-c,\ 1/2<|u|<2\}.
    \end{align}

    It is enough to prove the estimate for a measurable choice of radii
    $r(x)\in[1,2]^n$, with a bound independent of this choice. 
    Write $x=(x',h)$ and fix $h$. We estimate
    \begin{align}\label{eq:linearized-local}
        L_\lambda f(x',h)
        =\int_{\R^n}\frac{\chi(\delta_r^{-1}(x-y))}{r_1\cdots r_n}
        \lambda\check\beta\bigl(\lambda F_{x,r}(y)\bigr)f(y)\,dy,
    \end{align}
    where $r=r(x',h)$.

    On the support of the amplitude, \eqref{eq:patch} gives $y_n>h$,
    so we make the change of variables
    \begin{align}\label{eq:slice-change}
        y'=u,\qquad y_n=h+\sqrt{s},\qquad
        dy_n=\frac{ds}{2\sqrt{s}}.
    \end{align}
    On this support, $|u-x'|_\infty\leq4$ and $c^2\leq s\leq16$.
    The defining function now takes the form
    \begin{align}\label{eq:exact-graph}
        F_{x,r}(u,h+\sqrt{s})
        =\frac{s-\Phi_{x',h}(u)}{r_n^2},
        \qquad
        \Phi_{x',h}(u)=r_n^2-\sum_{i=1}^{n-1}
        \frac{r_n^2}{r_i^2}(u_i-x_i)^2.
    \end{align}
    Fourier inversion gives
    \begin{align}\label{eq:normal-inversion}
        \lambda\check\beta\left(\lambda\frac{s-\Phi}{r_n^2}\right)
        =r_n^2\int_\R e^{2\pi i\tau(s-\Phi)}
        \beta(r_n^2\tau/\lambda)\,d\tau.
    \end{align}
    The operator on the slice $x_n=h$ therefore has the form
    \eqref{eq:graph-operator}, with
    \begin{align}\label{eq:sliced-amplitude}
        a_{x',h}(u,s)
        &=\frac{r_n^2}{2\sqrt{s}\,r_1\cdots r_n}
        \chi\left(\frac{x'-u}{r'},-\frac{\sqrt{s}}{r_n}\right),\\
        q_{x',h}(\tau)&=\beta(r_n^2\tau/\lambda).
    \end{align}
    We extend the amplitude by zero for $s\leq0$. Since its support
    stays away from $s=0$, all input derivatives are uniformly bounded.
    Moreover,
    \begin{align}
        \frac14\leq\frac{r_n^2}{r_i^2}\leq4,\qquad
        \frac{\lambda}{8}\leq|\tau|\leq2\lambda
        \quad\text{on }\supp q_{x',h}.
    \end{align}
    These bounds are independent of the choice of radii, as required
    in Lemma~\ref{lem:graph}. To localize the input to the relevant
    $s$-interval, choose $\zeta\in C_c^\infty((0,\infty))$ equal to one
    on $[c^2,16]$, and set
    \begin{align}
        g_h(u,s)=\zeta(s)f(u,h+\sqrt{s}),\quad s>0,\\
        \qquad g_h(u,s)=0,\quad s\leq0.
    \end{align}
    Lemma~\ref{lem:graph}, with $m=n-1$, yields
    \begin{align}
        \|L_\lambda f(\,\cdot\,,h)\|_{L^2(\R^{n-1})}
        \lesim \lambda^{-(n-3)/4}\|g_h\|_{L^2(\R^n)}.
    \end{align}
    We now integrate over $h$ to recover the full $L^2$ norm.
    By Fubini and a translation in $h$, we have
    \begin{align}\label{eq:slicing-norm}
        \int_\R\|g_h\|_2^2\,dh
        &=\int_{\R^{n-1}}\int_0^\infty\int_\R
        |\zeta(s)|^2|f(u,h+\sqrt{s})|^2\,dh\,ds\,du\\
        &=\|\zeta\|_2^2\|f\|_2^2.
    \end{align}
    This gives the required bound for every measurable choice of radii.
    Passing to the supremum as above and summing over the patches
    proves the proposition.
\end{proof}

\begin{remark}\label{rem:dimension}
    The decay in \eqref{eq:local-bound} holds when $n\geq4$.
    When $n=3$, the estimate is uniform in $\lambda$, but does not
    suffice for the summation in Section~\ref{sec:main-proof}.
\end{remark}

\section{From bounded radii to arbitrary radii}\label{sec:global}

We prove Lemma~\ref{lem:global} by induction on $d$. On each dyadic
rectangle of radii, we divide the frequency in every coordinate into
low, middle and high parts. The middle parts have bounded overlap,
so their contribution follows from the local estimate and Plancherel's
theorem. The high frequencies are controlled by smoothing the kernel.
For the low frequencies, the kernel is approximated by a product
involving a coordinate marginal of $K$, to which we apply the
induction hypothesis. We begin by checking that these marginals
satisfy the same local estimate. Throughout this section, $K$, $A$,
$B$ and $R$ are as in the lemma.

\subsection{Coordinate marginals and product kernels}

For $I\subseteq\{1,\ldots,d\}$ and $J=I^c$, define
\begin{align}\label{eq:marginal}
    K^I(y_I)=\int_{\R^J}K(y_I,y_J)\,dy_J.
\end{align}
For nonempty $I$, $K^I_{r_I}$ denotes the normalized dilation
of $K^I$ in the coordinates indexed by $I$.
When $I=\varnothing$, we interpret $K^I$ as the scalar $\int K$.
To obtain the local estimate for $K^I$, we test
\eqref{eq:global-hyp} on functions that are constant on a large box
in the remaining coordinates. More precisely, let
\begin{align}
    f_L(x)=g(x_I)\ind_{[-L,L]^J}(x_J),
\end{align}
fix $t_J=\mathbf1$, and restrict the output to
$x_J\in[-L+R,L-R]^J$. Since $K$ is supported in $[-R,R]^d$,
the convolution on this smaller box is exactly the convolution
with $K^I$ in the $I$ coordinates. Thus, for $L>R$,
\eqref{eq:global-hyp} gives
\begin{align}\label{eq:marginal-local}
    (2L-2R)^{|J|}
    \left\|\sup_{t_I\in[1,2]^I}
    |K^I_{t_I}*g|\right\|_2^2
    \leq A^2(2L)^{|J|}\|g\|_2^2.
\end{align}
Letting $L\to\infty$ proves the local bound for $K^I$ with the same
constant $A$. When $I=\varnothing$, the same argument gives
\begin{align}\label{eq:mass}
    \left|\int K\right|\leq A.
\end{align}
Also, $K^I$ is supported in $[-R,R]^I$ and
\begin{align}\label{eq:marginal-C1}
    \|K^I\|_{C^1}\lesim_{d,R} B.
\end{align}

The errors in the kernel approximations below will be bounded by
multiples of the product kernels
\begin{align}\label{eq:W}
    W_{v}(y)=\prod_{i=1}^d
    v_i^{-1}(1+|y_i|/v_i)^{-3}.
\end{align}
By \eqref{eq:prelim-product-max}, we have
\begin{align}\label{eq:W-max}
    W_{v}*|f|\lesim_d \Mrec f,
\end{align}
uniformly in $v>0$. We will also use the corresponding bound for
convolution in a subset $J$ of the coordinates. We write $M_J$ for
the rectangular maximal operator in those coordinates and set
$M_\varnothing=\Id$.

\subsection{A frequency decomposition}

Choose a real even function $\vartheta\in C_c^\infty(\R)$ such that
$0\leq\vartheta\leq1$, $\vartheta=1$ on $[-1,1]$, and
$\supp\vartheta\subset[-2,2]$. Write
\begin{align}
    \varphi=\check\vartheta,\qquad
    \varphi_v(t)=v^{-1}\varphi(t/v).
\end{align}
In particular, $\int\varphi=1$. For a coordinate set $I$, write
$\Psi_{v_I}(y_I)=\prod_{i\in I}\varphi_{v_i}(y_i)$.

Write $r=2^kt$, where $k\in\Z^d$ and $t\in[1,2]^d$.
For a fixed integer $M\geq3$, define the frequency cutoffs
\begin{align}\label{eq:frequency-cutoffs}
    \widehat{L_{i,k}f}(\xi)
    &=\vartheta(2^{k_i+M}\xi_i)\widehat f(\xi),\\
    \widehat{H_{i,k}f}(\xi)
    &=\vartheta(2^{k_i-M}\xi_i)\widehat f(\xi).
\end{align}
These satisfy $H_{i,k}L_{i,k}=L_{i,k}$. For the decomposition, put
\begin{align}
    P_k=\prod_{i=1}^d(H_{i,k}-L_{i,k}),\qquad
    L_{J,k}=\prod_{j\in J}L_{j,k},
\end{align}
and
\begin{align}
    E_k=\left(\Id-\prod_{i=1}^d H_{i,k}\right)
    \prod_{i=1}^d(\Id-L_{i,k}).
\end{align}
Since $H_{i,k}(\Id-L_{i,k})=H_{i,k}-L_{i,k}$, expanding the
products gives the exact decomposition
\begin{align}\label{eq:frequency-split}
    \Id=P_k+
    \sum_{\varnothing\ne J\subseteq\{1,\ldots,d\}}
    (-1)^{|J|+1}L_{J,k}+E_k.
\end{align}

The multiplier $P_k$ vanishes unless
\begin{align}
    2^{-M}\leq2^{k_i}|\xi_i|\leq2^{M+1}
    \qquad(1\leq i\leq d).
\end{align}
Thus, for each $\xi$, at most $O_d((1+M)^d)$ of these multipliers
are nonzero. Since they are uniformly bounded,
\eqref{eq:prelim-overlap} gives
\begin{align}\label{eq:middle-square}
    \sum_{k\in\Z^d}\|P_kf\|_2^2
    \lesim_d(1+M)^d\|f\|_2^2.
\end{align}
The local estimate \eqref{eq:global-hyp} applies on every dyadic
rectangle of radii, with the same constant $A$, after rescaling each
coordinate. We can therefore sum the squares of the local estimates
to obtain
\begin{align}\label{eq:middle-bound}
    \left\|\sup_{k\in\Z^d}\sup_{t\in[1,2]^d}
    |K_{2^kt}*P_kf|\right\|_2^2
    &\leq\sum_k
    \left\|\sup_{t\in[1,2]^d}|K_{2^kt}*P_kf|\right\|_2^2\\
    &\leq A^2\sum_k\|P_kf\|_2^2
    \lesim_d A^2(1+M)^d\|f\|_2^2.
\end{align}

\subsection{Two kernel estimates}

Set
\begin{align}
    \epsilon=2^{-M},\qquad
    a_i=2^{k_i-M},\qquad b_i=2^{k_i+M},
    \qquad r=2^kt.
\end{align}
Then
\begin{align}
    \epsilon/2\leq a_i/r_i\leq\epsilon,
    \qquad r_i/b_i\leq2\epsilon.
\end{align}
We now prove the two kernel estimates needed for the high and low
frequency terms. At the small scales $a_i$, smoothing changes the
kernel by an error bounded by
\begin{align}\label{eq:high-kernel}
    |K_{r}-K_{r}*\Psi_{a}|
    \lesim_{d,R}\epsilon B W_{r},
\end{align}
whereas at the large scales $b_j$, smoothing in a nonempty set $J$ of
coordinates replaces $K_r$ by a product involving its marginal
$K^I$, where $I=J^c$. The error satisfies
\begin{align}\label{eq:low-kernel}
    \left|K_{r}*_J\Psi_{b_J}
    -K^I_{r_I}\otimes\Psi_{b_J}\right|
    \lesim_{d,R}\epsilon B W_{\rho},
    \qquad
    \rho_i=
    \begin{cases}
        r_i,&i\in I,\\
        b_i,&i\in J.
    \end{cases}
\end{align}
Here $*_J$ denotes convolution in the coordinates in $J$.

To prove \eqref{eq:high-kernel}, we first rescale by $\delta_r$ and
put $\eta_i=a_i/r_i$. Since $\int\Psi_{\eta}=1$, the mean value
theorem gives
\begin{align}\label{eq:high-near}
    |K(w)-K*\Psi_{\eta}(w)|
    &\leq B\int |z|_1|\Psi_{\eta}(z)|\,dz
    \lesim_d\epsilon B.
\end{align}
On $[-2R-2,2R+2]^d$, the product weight in the rescaled estimate
is bounded below, so this proves the required bound there.
Outside this box, $K(w)=0$, and we use the decay of the smoothing
kernel. Whenever $|w_i|>2R+2$, the Schwartz decay of $\varphi$
gives, for $N\geq4$,
\begin{align}
    \int_{-R}^R
    |\varphi_{\eta_i}(w_i-z_i)|\,dz_i
    \lesim_{R,N}\epsilon^{N-1}(1+|w_i|)^{-N}.
\end{align}
For the coordinates with $|w_i|\leq2R+2$, the integral is at most
$\|\varphi\|_1$, and the corresponding factors in the product weight
are bounded below. Applying the preceding estimate in every remaining
coordinate, at least one of which satisfies $|w_i|>2R+2$, we obtain
\begin{align}
    |K*\Psi_{\eta}(w)|
    \lesim_{d,R}\epsilon B\prod_i(1+|w_i|)^{-3}.
\end{align}
Combining this with \eqref{eq:high-near} and rescaling back proves
\eqref{eq:high-kernel}.

For \eqref{eq:low-kernel}, the definition of the marginal allows us
to write the difference inside the absolute value as
\begin{align}\label{eq:low-difference}
    \int_{\R^J}K_{r}(y_I,z_J)
    \bigl[\Psi_{b_J}(y_J-z_J)-\Psi_{b_J}(y_J)\bigr]\,dz_J.
\end{align}
On the support of $K_{r}$, we have $|z_j|/b_j\leq2R\epsilon$.
Thus the mean value theorem and the Schwartz bounds give
\begin{align}
    |\Psi_{b_J}(y_J-z_J)-\Psi_{b_J}(y_J)|
    \lesim_{d,R}\epsilon
    \prod_{j\in J}b_j^{-1}(1+|y_j|/b_j)^{-3}.
\end{align}
Substituting this into \eqref{eq:low-difference} and integrating in
$z_J$ proves \eqref{eq:low-kernel}, since $|y_i|\leq Rr_i$ for
$i\in I$ on the support of $K_r$.

\subsection{Completion of the induction}

The induction starts with the scalar bound \eqref{eq:mass} in
dimension zero. Assume that the lemma holds in dimensions less than
$d$. The middle frequency term has already been estimated in
\eqref{eq:middle-bound}, so we only need to bound the low and high
frequency terms in \eqref{eq:frequency-split}.

For the low frequencies, fix $J\ne\varnothing$ and let $I=J^c$.
By \eqref{eq:low-kernel} and \eqref{eq:W-max}, we have
\begin{align}\label{eq:low-operator}
    K_{r}*L_{J,k}f
    =\Psi_{b_J}*_J(K^I_{r_I}*_I f)
    +\mathcal E_{k,t,J}f,
    \qquad
    |\mathcal E_{k,t,J}f|
    \lesim_{d,R}\epsilon B\Mrec f.
\end{align}
Define
\begin{align}
    \mathcal G_I f(x)=
    \sup_{r_I>0}|K^I_{r_I}*_I f(x)|.
\end{align}
The first term in \eqref{eq:low-operator} is bounded by
$C_d M_J(\mathcal G_I f)$, uniformly in $k$ and $t$, because
$\Psi_{b_J}$ is a product of Schwartz kernels. When
$I\ne\varnothing$, the estimates \eqref{eq:marginal-local} and
\eqref{eq:marginal-C1} allow us to apply the induction hypothesis
to $K^I$. Applying it for each fixed value of the other coordinates
and then integrating in those coordinates gives
\begin{align}\label{eq:inductive-bound}
    \|\mathcal G_I f\|_2
    \lesim_{d,R}\bigl[(1+M)^{|I|/2}A+\epsilon B\bigr]\|f\|_2.
\end{align}
When $I=\varnothing$, the required bound follows instead from
$\mathcal G_\varnothing f=|\int K|\,|f|$ and \eqref{eq:mass}.
The $L^2$ boundedness of $M_J$, together with the error estimate in
\eqref{eq:low-operator}, now bounds each low frequency term by the
right-hand side of \eqref{eq:global-conclusion}.

It remains to estimate the high frequency term. Set
\begin{align}
    R_{k,t}=K_{2^kt}-K_{2^kt}*\Psi_{a}.
\end{align}
Since $\prod_i H_{i,k}$ is convolution with $\Psi_a$, commutativity
of the convolution operators gives
\begin{align}\label{eq:high-operator}
    K_{2^kt}*E_kf
    =\prod_{i=1}^d(\Id-L_{i,k})(R_{k,t}*f).
\end{align}
By \eqref{eq:high-kernel} and \eqref{eq:W-max},
\begin{align}
    |R_{k,t}*f|\lesim_{d,R}\epsilon B\Mrec f.
\end{align}
We expand the product in \eqref{eq:high-operator} and apply this
bound to each term. Each $L_{J,k}$ is convolution in $J$ with a
product of Schwartz kernels, so
\begin{align}\label{eq:high-bound}
    \sup_{k,t}|K_{2^kt}*E_kf|
    \lesim_{d,R}\epsilon B
    \sum_{J\subseteq\{1,\ldots,d\}}M_J(\Mrec f).
\end{align}
The $L^2$ boundedness of $M_J$ and $\Mrec$ shows that this term
has norm at most $C_{d,R}\epsilon B\|f\|_2$. Combining the high
frequency estimate with \eqref{eq:middle-bound} and the low
frequency estimates proves \eqref{eq:global-conclusion} and
completes the induction.

\section{Proof of the maximal estimate}\label{sec:main-proof}
Assume $n\geq4$. We decompose the surface measure in the normal
variable and apply the estimates proved above to each piece.
Interpolation allows us to sum these estimates.

\subsection{Decomposition in the normal variable}

Choose a radial function
$\chi\in C_c^\infty(\{u:1/2<|u|<2\})$ equal to one near $S^{n-1}$.
Let $\rho\in C_c^\infty(\R)$ be real and even, with
$\rho=1$ on $[-1,1]$ and support in $(-2,2)$. Set
\begin{align}
    \beta(t)=\rho(t)-\rho(2t),\qquad c_n=\frac{2}{|S^{n-1}|},
\end{align}
where $|S^{n-1}|$ denotes unnormalized surface area. Define
\begin{align}\label{eq:dyadic-kernels}
    K_0(u)&=c_n\chi(u)\check\rho(|u|^2-1),\\
    K_j(u)&=c_n\chi(u)2^j\check\beta\bigl(2^j(|u|^2-1)\bigr),
    \qquad j\geq1.
\end{align}
Since
\begin{align}
    \rho(\tau)+\sum_{j=1}^J\beta(2^{-j}\tau)=\rho(2^{-J}\tau),
\end{align}
Fourier inversion gives
\begin{align}\label{eq:partial-kernel}
    K_0(u)+\sum_{j=1}^J K_j(u)
    =c_n\chi(u)2^J\check\rho\bigl(2^J(|u|^2-1)\bigr).
\end{align}
To identify the limit, integrate \eqref{eq:partial-kernel} against
a continuous compactly supported test function and apply the coarea
formula. Since $c_n\delta(|u|^2-1)\,du=\sigma$, we obtain
\begin{align}\label{eq:measure-decomposition}
    \sigma=K_0+\sum_{j\geq1}K_j
\end{align}
both in distributions and against compactly supported continuous
functions.

For $j\geq1$, $K_j$ is $c_n$ times the kernel in
\eqref{eq:normal-kernel} at $\lambda=2^j$, with the cutoffs
chosen above. For $j\geq0$, write $K_{j,r}:=(K_j)_r$. We set
\begin{align}
    M_j f(x)=\sup_{r>0}|K_{j,r}*f(x)|,\qquad
    a=\frac{n-3}{4}>0.
\end{align}
The kernels $K_j$ are supported in a fixed cube, and differentiation
gives
\begin{align}\label{eq:kernel-C1}
    \|K_j\|_{C^1}\lesim_n 2^{2j}.
\end{align}
We also have, for $j\geq1$,
\begin{align}\label{eq:kernel-L1}
    \|K_j\|_1\lesim_n1,
\end{align}
by the coarea formula: the density of the level sets of $|u|^2-1$
is bounded on $\supp\chi$, and $\check\beta$ is integrable.
The same bound for $K_0$ follows directly from its definition.

\subsection{Bounds for a single piece}

Let $j\geq1$. Proposition~\ref{prop:local} gives the local bound
$A\lesim_n2^{-aj}$. To pass to arbitrary radii, we apply
Lemma~\ref{lem:global} with \eqref{eq:kernel-C1} and choose
\begin{align}
    M=\lceil(a+3)j\rceil+3.
\end{align}
This gives
\begin{align}\label{eq:dyadic-L2}
    \|M_j f\|_2
    \lesim_n(1+j)^{n/2}2^{-aj}\|f\|_2.
\end{align}
The bound $|K_j|\lesim_n2^j\ind_{[-2,2]^n}$ also gives
\begin{align}\label{eq:rough-pointwise}
    M_j f\lesim_n2^j\Mrec f.
\end{align}
The $L^q$ boundedness of $\Mrec$ therefore implies
\begin{align}\label{eq:dyadic-Lq}
    \|M_j f\|_q\lesim_{n,q}2^j\|f\|_q,
    \qquad 1<q<\infty.
\end{align}
Finally, \eqref{eq:kernel-L1} gives
\begin{align}\label{eq:dyadic-Linfty}
    \|M_j f\|_\infty\lesim_n\|f\|_\infty.
\end{align}

Suppose first that
\begin{align}
    \frac{n+1}{n-1}<p<2.
\end{align}
We choose $1<q<p$, to be fixed below, and let $\theta\in(0,1)$
satisfy
\begin{align}\label{eq:interpolation-parameters}
    \frac1p=\frac{1-\theta}{q}+\frac{\theta}{2}.
\end{align}
Interpolating \eqref{eq:dyadic-Lq} with \eqref{eq:dyadic-L2} yields
\begin{align}\label{eq:interpolated}
    \|M_j f\|_p
    \lesim_{n,p,q}
    (1+j)^{n\theta/2}
    2^{j[1-(1+a)\theta]}\|f\|_p.
\end{align}
To justify the interpolation, fix a measurable choice of radii
and apply \eqref{eq:prelim-interpolation} to the resulting linear
operator. Since the bounds are uniform in this choice, the
linearization argument in Section~\ref{sec:preliminaries}
recovers the supremum.

It remains to choose $q$ so that the exponent of $2^j$ is negative.
As $q$ decreases to one, this exponent tends to
\begin{align}\label{eq:critical-exponent}
    1-\frac{n+1}{2}\left(1-\frac1p\right)
    =\frac{n+1}{2p}-\frac{n-1}{2}<0.
\end{align}
We can therefore choose $q>1$ sufficiently close to one that the
exponent is still negative. For this choice,
\begin{align}\label{eq:summable}
    \sum_{j\geq1}\|M_j\|_{L^p\to L^p}<\infty.
\end{align}

For $p=2$, \eqref{eq:summable} follows directly from
\eqref{eq:dyadic-L2}. For $2<p<\infty$, interpolation with
\eqref{eq:dyadic-Linfty} gives
\begin{align}
    \|M_j f\|_p
    \lesim_{n,p}(1+j)^{n/p}2^{-2aj/p}\|f\|_p,
\end{align}
so \eqref{eq:summable} holds in this case as well.

\subsection{Summation}

The smooth compactly supported kernel $K_0$ satisfies
\begin{align}
    M_0 f\lesim_n\Mrec f.
\end{align}
For each fixed $r>0$ and $f\in C_c(\R^n)$, dilation of
\eqref{eq:measure-decomposition} gives pointwise convergence of
the corresponding convolutions. Taking absolute values and then
the supremum over $r$, we obtain
\begin{align}\label{eq:pointwise-sum}
    \Mst f(x)\leq M_0 f(x)+\sum_{j\geq1}M_j f(x).
\end{align}
For finite $p$ in \eqref{eq:range}, we now apply the triangle
inequality and use \eqref{eq:summable}, together with the
$L^p$ boundedness of $\Mrec$. This proves \eqref{eq:main}.
For $p=\infty$, the bound is trivial.
\hfill$\square$

\normalem

\noindent Institut des Hautes Études Scientifiques, 91440 Bures-sur-Yvette, France \\
Email address:\\
chenmf@ihes.fr

\end{document}